\documentclass[12pt]{amsart}

\usepackage[T1]{fontenc}
\usepackage{amsmath,amssymb,amsthm}
\usepackage[alphabetic]{amsrefs}
\usepackage{microtype}
\usepackage[svgnames]{xcolor}
\usepackage[margin=1in]{geometry}
\usepackage[
colorlinks=true,
linkcolor=DarkBlue,
urlcolor=DarkRed,
citecolor=DarkGreen
]{hyperref}

\allowdisplaybreaks

\newtheorem{thm}{Theorem}[section]

\newtheorem{lem}[thm]{Lemma}
\newtheorem{coro}[thm]{Corollary}

\theoremstyle{remark}
\newtheorem{rem}[thm]{Remark}

\DeclareMathOperator{\reg}{reg}
\DeclareMathOperator{\pd}{pd}
\DeclareMathOperator{\depth}{depth}
\DeclareMathOperator{\NM}{NM}
\DeclareMathOperator{\lk}{lk}
\DeclareMathOperator{\st}{st}

\numberwithin{equation}{section}

\begin{document}
\title{Homology of non-matching complexes under edge additions and applications to their Stanley-Reisner ideals}
\def\shorttitle{Homology of non-matching complexes}

\author{Jiawen Shan}
\address{School of Mathematics and Systems Science, Shenyang Normal University, Shenyang 110034, P. R. China}
\email{JiawenShan@synu.edu.cn}

\author{Zexin Wang}
\address{School of Mathematical Sciences, Soochow University, Suzhou 215006, P. R. China}
\email{zexinwang6@outlook.com}

\begin{abstract}
	For a bipartite graph $G$ and an integer $t\geq2$, let $\NM_t(G)$ be its $t$-non-matching complex. We prove that adding an edge while preserving bipartiteness induces an injection on reduced homology in degree $2t-3$. Combined with the cyclic-polytope model for non-matching complexes of cycles, this shows that $\NM_t(G)$ has Leray number $2t-2$ whenever $G$ contains a cycle of length at least $2t$. Under the same hypothesis, Hochster's formula yields regularity $2t-1$ for the Stanley-Reisner ideal $I_{\NM_t(G)}$, together with explicit lower bounds for the Betti numbers on its top regularity strand and for its projective dimension. If $G$ contains a $2t$-cycle, we also determine the maximal shifts of $I_{\NM_t(G)}$ through homological degree $|E(G)|-2t+1$. For $G=K_{r,s}$ with $2\leq t\leq r\leq s$, we determine the depth, projective dimension, all maximal shifts, and the unique extremal Betti number of $I_{\NM_t(K_{r,s})}$, thereby settling a conjecture on facet ideals of chessboard complexes.
\end{abstract}
\subjclass[2020]{Primary 05E45; Secondary 05E40, 13F55, 13D02}
\keywords{non-matching complexes, edge additions, Leray numbers, Stanley-Reisner ideals, chessboard complexes, Betti numbers}
\date{}
\maketitle

\section{Introduction}

Throughout, $K$ denotes a field. Let $G$ be a finite simple graph with vertex set $V(G)$ and edge set $E(G)$. For $W\subseteq E(G)$, let $G_W$ denote the spanning subgraph of $G$ with edge set $W$. The \emph{matching number} $\nu(G)$ is the maximum size of a matching in $G$, that is, the maximum number of pairwise disjoint edges in $G$. For an integer $t\geq2$, the \emph{$t$-non-matching complex} of $G$ is the simplicial complex on $E(G)$ defined by
\[
\NM_t(G)=\{W\subseteq E(G):\nu(G_W)<t\}.
\] 
Non-matching complexes of complete and complete bipartite graphs were studied by Linusson, Shareshian, and Welker \cite{LSW08}. For broader background on simplicial complexes of graphs, including complexes defined by matching restrictions, see \cite{Jonsson08}. Holmsen and Lee \cite{HL22} later obtained homological vanishing results for non-matching complexes of arbitrary graphs and for their links.

For a simplicial complex $\Delta$ and a subset $W$ of its vertex set, let $\Delta_W$ denote the induced subcomplex of $\Delta$ on $W$. The \emph{Leray number} of $\Delta$ over $K$, denoted by $L_K(\Delta)$, is the smallest integer $d\geq0$ such that
\[
\widetilde H_q(\Delta_W;K)=0
\]
for every $W$ and every $q\geq d$. If $G$ is bipartite, then, for every $W\subseteq E(G)$, the graph $G_W$ is a bipartite spanning subgraph of $G$, and
\[
(\NM_t(G))_W=\NM_t(G_W).
\]
Thus, applying \cite{HL22}*{Theorem~1.1} to each $G_W$, we obtain
\[
L_K(\NM_t(G))\leq 2t-2.
\]
It is therefore natural to ask when this upper bound is attained. Cycles provide the basic sharp examples. Indeed, if $C_\ell$ is a cycle with $\ell\geq2t$, then $\NM_t(C_\ell)$ is the boundary complex of the cyclic polytope $C(\ell,2t-2)$; see \cite{CDNW16}*{Lemmas~4.1 and~4.2}. Hence
\[
\widetilde H_{2t-3}(\NM_t(C_\ell);K)\ne0,
\]
and therefore
\[
L_K(\NM_t(C_\ell))=2t-2.
\]
This leads naturally to the question of whether this nonvanishing persists when further edges are added while bipartiteness is preserved.

Our main technical result gives an affirmative answer. We prove that if an edge is added to a bipartite graph and the resulting graph remains bipartite, then the inclusion of the corresponding non-matching complexes induces an injection on reduced homology in degree $2t-3$; see Theorem~\ref{thm:edge-addition}. The proof uses the decomposition
into the deletion and the star of the new edge, together with the Holmsen-Lee vanishing theorem for the corresponding link. Consequently, if a bipartite graph $G$ contains a cycle $C$ of length at least $2t$, then, by adding the remaining edges one at a time, we obtain
\[
\widetilde H_{2t-3}(\NM_t(G_W);K)\ne0
\]
for every $W$ satisfying
\[
E(C)\subseteq W\subseteq E(G).
\]
Together with the upper bound above, this yields
\[
L_K(\NM_t(G))=2t-2;
\]
see Theorem~\ref{thm:cycle-persistence}.

We now translate the preceding topological results into algebraic terms. Hochster's formula relates the reduced homology of induced subcomplexes to the multigraded Betti numbers of the corresponding Stanley-Reisner ideal. We first fix the relevant notation.

Set
\[
R=K[x_e:e\in E(G)]
\]
and endow it with its natural $\mathbb Z^{E(G)}$-grading. For $W\subseteq E(G)$, write
\[
x_W=\prod_{e\in W}x_e.
\]
If $\Delta$ is a simplicial complex on $E(G)$, its \emph{Stanley-Reisner ideal} is
\[
I_\Delta=(x_W:W\subseteq E(G),\ W\notin\Delta) \subseteq R.
\]

For $i\geq0$ and $W\subseteq E(G)$, the $i$-th \emph{multigraded Betti number} of $I_\Delta$ in multidegree $\mathbf 1_W$ is
\[
\beta_{i,W}(I_\Delta)=\dim_K\operatorname{Tor}_i^R(I_\Delta,K)_{\mathbf 1_W},
\]
where $\mathbf 1_W$ denotes the characteristic vector of $W$. Since $I_\Delta$ is squarefree, for $i, j\geq0$, its $(i,j)$-th \emph{graded Betti number} is given by
\[
\beta_{i,j}(I_\Delta)=\sum_{\substack{W\subseteq E(G)\\|W|=j}}\beta_{i,W}(I_\Delta).
\]

Assume now that $I_\Delta\ne0$. Whenever $\beta_{i,j}(I_\Delta)\ne0$ for some $j$, the $i$th\emph{maximal shift} of $I_\Delta$ is
\[
t_i(I_\Delta)=\max\{j:\beta_{i,j}(I_\Delta)\ne0\}.
\]
The \emph{projective dimension} of $I_\Delta$ is
\[
\pd I_\Delta=\max\{i:\beta_{i,j}(I_\Delta)\ne0\text{ for some }j\}.
\]
The \emph{Castelnuovo-Mumford regularity}, or simply the \emph{regularity}, of $I_\Delta$ is
\[
\reg I_\Delta=\max\{j-i:\beta_{i,j}(I_\Delta)\ne0\}
                    =\max_{0\leq i\leq\pd I_\Delta}\{t_i(I_\Delta)-i\}.
\]

We now specialize to the non-matching complex. Since the minimal nonfaces of $\NM_t(G)$ are precisely the $t$-matchings of $G$, its Stanley-Reisner ideal is
\[
I_{\NM_t(G)}=(x_W:W\text{ is a $t$-matching of }G)\subseteq R.
\]
Thus Hochster's formula (see, for instance, \cite{HH11}*{Theorem~8.1.1}) gives
\begin{equation}\label{eq:hochster}
	\beta_{i,W}(I_{\NM_t(G)})=\dim_K\widetilde H_{|W|-i-2}(\NM_t(G_W);K),
\end{equation}
for every $i\geq0$ and $W\subseteq E(G)$. In particular, whenever $I_{\NM_t(G)}\ne0$, Hochster's formula yields the standard relation (see also \cite{KM06})
\begin{equation}\label{eq:regularity-leray}
	\reg I_{\NM_t(G)}=L_K(\NM_t(G))+1.
\end{equation}

Assume that $G$ is bipartite and contains a cycle $C$ of length $\ell\geq 2t$, and set $n=|E(G)|$. Combining Theorem~\ref{thm:cycle-persistence} with Hochster's formula, we obtain
\[
\reg I_{\NM_t(G)}=2t-1,
\qquad
\pd I_{\NM_t(G)}\geq n-2t+1,
\]
and
\[
\beta_{i,i+2t-1}(I_{\NM_t(G)}) \geq \binom{n-\ell}{i+2t-1-\ell}
\]
for $\ell-2t+1\leq i\leq n-2t+1$. If $G$ contains a $2t$-cycle, taking $\ell=2t$ in these estimates determines the maximal shifts of $I_{\NM_t(G)}$ through homological degree $n-2t+1$; see Corollary~\ref{cor:algebraic-consequences}.

We finally turn to complete bipartite graphs. For \(2\leq t\leq r\leq s\), set
\[
R_{r,s}=K[x_{ij}:1\leq i\leq r,\ 1\leq j\leq s],
\qquad
J_{t;r,s}=I_{\NM_t(K_{r,s})}.
\]
The chessboard complex \(\Delta_{r,s}\) is the matching complex of \(K_{r,s}\); see \cite{BLVZ94} for background. Under the usual identification of the edges of \(K_{r,s}\) with the squares of an \(r\times s\) chessboard, the \(t\)-matchings are precisely the facets of the pure \((t-1)\)-skeleton \(\Delta_{r,s}^{[t-1]}\).  Consequently,
\[
J_{t;r,s}=F\bigl(\Delta_{r,s}^{[t-1]}\bigr),
\]
and, in particular,
\[
J_{r;r,s}=F(\Delta_{r,s}).
\]
Jiang, Zhao, Wang, and Zhu \cite{JZWZ23} studied the facet ideals
\(F(\Delta_{r,s})\). They proved that
\[
\reg\bigl(R_{r,s}/F(\Delta_{r,s})\bigr)=\depth\bigl(R_{r,s}/F(\Delta_{r,s})\bigr)=2r-2
\]
when \(r\leq3\), and conjectured that the same formulas hold for all \(2\leq r\leq s\). Corollary~\ref{cor:algebraic-consequences}, together with \cite{JZWZ23}*{Corollary~5.8}, confirms this conjecture.

We prove the corresponding formulas for all pure skeleta:
\[
\reg(R_{r,s}/J_{t;r,s})=\depth(R_{r,s}/J_{t;r,s})=2t-2
\qquad
(2\leq t\leq r\leq s).
\]
We further determine the projective dimension, all maximal shifts, and the unique extremal Betti number of \(J_{t;r,s}\); see Theorem~\ref{thm:complete-bipartite}.

%%%%%%%%%%%%%%%%%%%%%%%%%%%%%%%%%%%%%%%%%%%%%%%%%%%%%%%%%
\section{Homology under edge additions and Stanley-Reisner consequences}
In this section we prove that adding an edge while preserving bipartiteness induces an injection on the reduced homology of the corresponding non-matching complexes in degree $2t-3$, and derive algebraic consequences for their Stanley-Reisner ideals.

Let $\Delta$ be a simplicial complex. For a face $\sigma\in\Delta$, the \emph{link} and the \emph{star} of $\sigma$ in $\Delta$ are defined by
\[
\lk_\Delta(\sigma)=\{\tau\in\Delta: \tau\cap\sigma=\varnothing,\ \tau\cup\sigma\in\Delta\}
\]
and
\[
\st_\Delta(\sigma)=\{\tau\in\Delta: \tau\cup\sigma\in\Delta\},
\]
respectively. For a vertex $v$ of $\Delta$, we denote $\lk_\Delta(\{v\})$ and $\st_\Delta(\{v\})$ simply by $\lk_\Delta(v)$ and $\st_\Delta(v)$, respectively. In particular, $\st_\Delta(v)$ is a cone with apex $v$. 

We first prove the following edge-addition result, which is the key ingredient in the persistence argument.

\begin{thm}\label{thm:edge-addition}
	Let $t\geq2$, let $G$ be a bipartite graph, and let $H$ be a spanning subgraph of $G$. For $e\in E(G)\setminus E(H)$, let $H+e$ denote the spanning subgraph of $G$ with edge set $E(H)\cup\{e\}$. Then the inclusion
	\[
	\NM_t(H)\hookrightarrow\NM_t(H+e)
	\]
	induces an injection
	\[
	\widetilde H_{2t-3}(\NM_t(H);K)
	\hookrightarrow
	\widetilde H_{2t-3}(\NM_t(H+e);K).
	\]
\end{thm}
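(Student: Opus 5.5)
Set $\Delta=\NM_t(H+e)$. The plan is to decompose $\Delta$ into the deletion of $e$ and the star of $e$, and to show that the relevant connecting map kills nothing in degree $2t-3$ because the link of $e$ has vanishing homology in degree $2t-4$. The vanishing will come from the Holmsen--Lee theorem \cite{HL22}.

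First I identify the deletion. A set $W\subseteq E(H)$ satisfies $\nu(H_W)<t$ if and only if it does so in $H+e$, so the deletion $\Delta\setminus e=\{\sigma\in\Delta:e\notin\sigma\}$ equals $\NM_t(H)$. Thus
\[
\Delta=\NM_t(H)\cup\st_\Delta(e),\qquad \NM_t(H)\cap\st_\Delta(e)=\lk_\Delta(e).
\]
The star is a cone with apex $e$, so it is acyclic. The reduced Mayer--Vietoris sequence then gives an exact segment
\[
\widetilde H_{2t-3}(\lk_\Delta(e);K)\to\widetilde H_{2t-3}(\NM_t(H);K)\oplus 0\to\widetilde H_{2t-3}(\Delta;K)\to\cdots .
\]
This is the wrong direction for injectivity. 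I will instead use the portion one step later,
\[
\widetilde H_{2t-3}(\lk_\Delta(e))\to\widetilde H_{2t-3}(\NM_t(H))\to\widetilde H_{2t-3}(\Delta).
\]
Exactness shows that the kernel of the inclusion-induced map equals the image of $\widetilde H_{2t-3}(\lk_\Delta(e))$. Equivalently, via the long exact sequence of the pair $(\Delta,\NM_t(H))$ with excision $H_*(\Delta,\NM_t(H))\cong H_*(\st,\lk)\cong\widetilde H_{*-1}(\lk)$, the kernel is the image of the boundary map from $\widetilde H_{2t-2}(\st,\lk)\cong\widetilde H_{2t-3}(\lk)$. Either way, it suffices to prove
\[
\widetilde H_{2t-3}(\lk_\Delta(e);K)=0 .
\]

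Next I describe the link. Let $e=uv$. A set $\tau\subseteq E(H)$ lies in $\lk_\Delta(e)$ exactly when $\nu(H_\tau+e)<t$. I want to express this as a non-matching-type condition with parameter $t-1$. The natural guess is that $\lk_\Delta(e)$ is the complex of those $\tau$ for which $H_\tau$, with the edges at $u,v$ treated specially, has no $(t-1)$-matching avoiding $u,v$ beyond what augmenting allows. This is not exactly $\NM_{t-1}$ of a subgraph, because an edge through $u$ or $v$ can be traded against $e$.

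This identification is the main obstacle. The cleanest route is to invoke the part of \cite{HL22} that treats links of non-matching complexes. The introduction says Holmsen and Lee obtained vanishing for links as well, and I expect their result to give
\[
\widetilde H_q(\lk_{\NM_t(G')}(e);K)=0\quad\text{for } q\geq 2t-3
\]
when $G'$ is bipartite; this is exactly the shift by one relative to the bound $2t-2$ on $\NM_t$ itself. I will apply it with $G'=H+e$, which is bipartite as a subgraph of $G$; the theorem's hypothesis that $G$ is bipartite is used precisely here. If that statement is not directly available in this form, my fallback is to prove the vanishing myself. I would induct on the edges incident to $u$ or $v$, decomposing the link by deletion and star in the same way. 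The base case has no other edges at $u,v$: then $\nu(H_\tau+e)=\nu(H_\tau)+1$, so the link equals $\NM_{t-1}(H)$ on $E(H)$. Its Leray number is at most $2t-4$ by \cite{HL22}, which gives vanishing in degree $2t-3$. With this vanishing in hand, exactness shows the map $\widetilde H_{2t-3}(\NM_t(H);K)\to\widetilde H_{2t-3}(\NM_t(H+e);K)$ is injective.
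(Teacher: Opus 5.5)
Your argument is essentially the paper's proof: you write $\NM_t(H+e)$ as the union of the deletion $\NM_t(H)$ and the cone $\st(e)$, meeting in $\lk(e)$, and apply Mayer--Vietoris together with the Holmsen--Lee link vanishing \cite{HL22}*{Theorem~1.2}, which gives exactly $\widetilde H_{2t-3}(\lk_{\NM_t(H+e)}(e);K)=0$ for the nonempty face $\{e\}$ of the bipartite graph $H+e$, so your fallback induction is unnecessary. Tidy the exposition: the degree needed is $2t-3$, not the $2t-4$ stated in your opening sentence, and the first Mayer--Vietoris segment you wrote already gives injectivity, so drop the ``wrong direction'' remark.
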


\begin{proof}
Set
\[
\Delta_0=\NM_t(H) \quad \text{and} \quad \Delta_1=\NM_t(H+e).
\]
Since $t\geq 2$, the singleton $\{e\}$ is a face of $\Delta_1$. Moreover, $\Delta_0$ is the deletion of the vertex $e$ from $\Delta_1$. Hence
\[
\Delta_1=\Delta_0 \cup\st_{\Delta_1}(e),
\quad
\Delta_0\cap\st_{\Delta_1}(e)=\lk_{\Delta_1}(e).
\]
Let $d=2t-3$. The Mayer--Vietoris sequence contains
\[
\widetilde H_d(\lk_{\Delta_1}(e);K)
\longrightarrow
\widetilde H_d(\Delta_0;K)\oplus
\widetilde H_d(\st_{\Delta_1}(e);K)
\longrightarrow
\widetilde H_d(\Delta_1;K).
\]
Since $\st_{\Delta_1}(e)$ is a cone with apex $e$, it follows that
\[
\widetilde H_d(\st_{\Delta_1}(e);K)=0.
\]
Moreover, $\{e\}$ is a nonempty face of $\Delta_1$, and $H+e$ is bipartite. Thus \cite{HL22}*{Theorem~1.2} gives
\[
\widetilde H_d(\lk_{\Delta_1}(e);K)=0.
\]
Hence, by exactness, the map
\[
\widetilde H_d(\Delta_0;K)\longrightarrow
\widetilde H_d(\Delta_1;K)
\]
is injective.
\end{proof}

The next lemma provides the initial nonvanishing needed for the persistence argument.
\begin{lem}\label{lem:cyclic-core}
Let $C_\ell$ be a cycle of length $\ell \geq 2t$. Then $\NM_t(C_\ell)$ is isomorphic to the boundary complex of the cyclic polytope $C(\ell,2t-2)$. In particular,
\[
\widetilde H_{2t-3}(\NM_t(C_\ell);K)\ne0.
\]
\end{lem}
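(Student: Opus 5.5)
We identify $\NM_t(C_\ell)$ with a boundary complex via Gale's evenness criterion. The introduction credits this identification to \cite{CDNW16}*{Lemmas~4.1 and~4.2}, and we could simply invoke it. Below is a self-contained route.

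Label the edges of $C_\ell$ cyclically as $e_1,\dots,e_\ell$, so that $e_i$ and $e_{i+1}$ share a vertex (indices mod $\ell$). A set $W\subseteq E(C_\ell)$ is then a union of maximal cyclic runs of consecutive edges. A path with $m$ edges has matching number $\lceil m/2\rceil$. If $W$ is the whole cycle, then $\nu=\lfloor \ell/2\rfloor\ge t$, so $W\notin\NM_t(C_\ell)$. Otherwise,
\[
\nu((C_\ell)_W)=\sum_{\text{runs }P}\lceil |P|/2\rceil .
\]
Therefore $W\in\NM_t(C_\ell)$ if and only if $\sum_P\lceil |P|/2\rceil\le t-1$.

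Next we compare this with the cyclic polytope $C(\ell,d)$, where $d=2t-2$ is even and $\ell\ge d+2>d+1$. This polytope is simplicial and $(d/2)$-neighborly. By Gale's evenness criterion, its facets are exactly the $d$-subsets of $[\ell]$ that split, cyclically, into runs of even length; for even $d$ the criterion is invariant under cyclic shift. A set $W$ is a face of the boundary complex if and only if it is contained in such a facet.

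We claim that this holds exactly when $\sum_P\lceil |P|/2\rceil\le t-1$, for $W\ne[\ell]$. Necessity: a facet $F\supseteq W$ is a disjoint union of $t-1$ cyclic pairs $\{i,i+1\}$. Each run $P$ of $W$ lies inside a run of $F$, which has even length, so the pairs of $F$ meeting $P$ are disjoint from those meeting other runs of $W$. Covering $P$ needs at least $\lceil |P|/2\rceil$ pairs, which gives the inequality. Sufficiency: pad each odd run by one adjacent element to make it even. Then add further disjoint pairs elsewhere until the total size reaches $d$. This is possible because $\ell\ge d+2$ leaves room; some care is needed when padded runs merge, but merging two even runs keeps the length even.

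This bijective identification on the vertex set $E(C_\ell)\leftrightarrow[\ell]$ shows $\NM_t(C_\ell)\cong\partial C(\ell,2t-2)$. The boundary of a $(2t-2)$-dimensional simplicial polytope is a simplicial $(2t-3)$-sphere, so $\widetilde H_{2t-3}(\NM_t(C_\ell);K)\cong K\neq0$.

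The main obstacle is the sufficiency step: extending $W$ to an even-run set of size exactly $2t-2$. One must handle the wrap-around and the padding collisions cleanly. We would argue greedily and count free slots using $\ell\ge 2t$. If this becomes messy, we fall back on citing \cite{CDNW16}.
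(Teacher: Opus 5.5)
Most of your self-contained route is sound. The formula $\nu((C_\ell)_W)=\sum_P\lceil |P|/2\rceil$ for $W\neq[\ell]$ is correct, as are the even-$d$ form of Gale's evenness criterion and the necessity direction. In the necessity direction, the real reason a pair of $F$ cannot meet two runs of $W$ is that its two elements are cyclically consecutive, so they would lie in a single run of $W$; the evenness of the runs of $F$ plays no role.

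\textbf{The gap is the sufficiency step.} The procedure ``pad the odd runs, then add disjoint pairs in the free positions'' can fail, and counting free positions via $\ell\geq 2t$ cannot rescue it. Take $\ell=12$, $t=6$ and $W=\{1,2,4,5,7,8,10,11\}$.
\begin{itemize}
\item All runs are even, so nothing is padded, and $\sum_P\lceil |P|/2\rceil=4\leq t-1$.
\item Four positions are free, yet the complement $\{3,6,9,12\}$ contains no cyclic pair, so no fifth pair can be added.
\item Facets containing $W$ do exist, e.g.\ $[12]\setminus\{3,6\}$. But its forced pairing $\{4,5\},\{7,8\},\{9,10\},\{11,12\},\{1,2\}$ breaks the pair $\{10,11\}$ of $W$, so one must re-pair rather than extend.
\end{itemize}
Similarly, $W=\{1,4,7,10\}$ gets stuck if every run is padded to the right.

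\textbf{A repair.} Postpone the pairing to the very end.
\begin{itemize}
\item Since $W\neq[\ell]$, pick $x\notin W$ and add the elements of $[\ell]\setminus(W\cup\{x\})$ one at a time.
\item Every intermediate set misses $x$, so its value $\sum_P\lceil|P|/2\rceil$ is a matching number, which rises by at most one per added edge.
\item The value starts at most $t-1$ and ends at $\lceil(\ell-1)/2\rceil\geq t$. Hence some $W'$ with $W\subseteq W'\subseteq[\ell]\setminus\{x\}$ has value exactly $t-1$.
\item Pad each odd run of $W'$ by its cyclic successor. These successors lie outside $W'$ by maximality of runs, and they are pairwise distinct because distinct runs have distinct last elements.
\item This gives exactly $t-1$ pairwise disjoint cyclic pairs, i.e.\ a $(2t-2)$-set all of whose cyclic runs are even. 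That set is a facet of $\partial C(\ell,2t-2)$ containing $W$.
\end{itemize}

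\textbf{Comparison with the paper.} With this fix, your argument is a genuinely self-contained alternative to the paper's proof. The paper matches minimal nonfaces instead. It observes that the $t$-matchings of $C_\ell$ are the independent $t$-subsets of the cycle on $[\ell]$. It then quotes \cite{CDNW16}*{Lemma~4.2} to identify these as the minimal nonfaces of $M_{\ell,t-1}$, and \cite{CDNW16}*{Lemma~4.1} for $M_{\ell,t-1}\cong\partial C(\ell,2t-2)$. Your fallback of citing \cite{CDNW16} is exactly the paper's proof. Your route gains independence from that reference at the cost of the extension argument above.
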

\begin{proof}
Label the edges of $C_\ell$ cyclically as $e_1, \ldots, e_\ell$ and identify $e_i$ with $i\in[\ell]$. Then the minimal nonfaces of $\NM_t(C_\ell)$, namely the $t$-matchings of $C_\ell$, correspond exactly to the independent $t$-subsets of the cycle on $[\ell]$.
	
Since $\ell\geq2t$, we have $t-1<\ell/2$. Let $M_{\ell, t-1}$ be the simplicial complex introduced in \cite{CDNW16}, whose faces are the subsets of $[\ell]$ contained in the set of vertices covered by a matching of size $t-1$ in the cycle on $[\ell]$. By \cite{CDNW16}*{Lemma~4.2}, we see that the minimal nonfaces of $M_{\ell, t-1}$ are precisely those independent $t$-subsets of the cycle on $[\ell]$. Hence
\[
\NM_t(C_\ell)\cong M_{\ell,t-1}.
\]
By \cite{CDNW16}*{Lemma~4.1}, we have
\[
M_{\ell,t-1}\cong\partial C(\ell,2t-2).
\]
Since $\partial C(\ell,2t-2)$ is a sphere of dimension $2t-3$, the result follows.
\end{proof}

Combining Lemma~\ref{lem:cyclic-core} with repeated applications of Theorem~\ref{thm:edge-addition} gives the following persistence result.

\begin{thm}\label{thm:cycle-persistence}
	Let $G$ be a bipartite graph, let $t\geq2$, and suppose that $G$ contains a cycle $C$ of length $\ell\geq2t$. Then, for every $W\subseteq E(G)$ with $E(C)\subseteq W$, the natural inclusion
	\[
	\NM_t(C)=\NM_t(G_{E(C)})\hookrightarrow\NM_t(G_W)
	\]
	induces an injection
	\[
	\widetilde H_{2t-3}(\NM_t(C);K)
	\hookrightarrow
	\widetilde H_{2t-3}(\NM_t(G_W);K).
	\]
	Consequently,
	\[
	\widetilde H_{2t-3}(\NM_t(G_W);K)\ne0
	\qquad\text{and}\qquad
	L_K(\NM_t(G_W))=2t-2
	\]
	for every such $W$. In particular,
	\[
	L_K(\NM_t(G))=2t-2.
	\]
\end{thm}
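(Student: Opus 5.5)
The plan is to fix $W$ with $E(C)\subseteq W\subseteq E(G)$ and list the edges of $W\setminus E(C)$ as $e_1,\dots,e_m$. Set $W_0=E(C)$ and $W_k=E(C)\cup\{e_1,\dots,e_k\}$ for $1\le k\le m$, so $W_m=W$. Each $G_{W_k}$ is a spanning subgraph of the bipartite graph $G$. Moreover, $G_{W_{k}}=G_{W_{k-1}}+e_k$ with $e_k\in E(G)\setminus W_{k-1}$.

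\textbf{Step 1: the injection.} Applying Theorem~\ref{thm:edge-addition} with $H=G_{W_{k-1}}$ and $e=e_k$ shows that each inclusion $\NM_t(G_{W_{k-1}})\hookrightarrow\NM_t(G_{W_k})$ is injective on $\widetilde H_{2t-3}(-;K)$. The inclusion $\NM_t(G_{E(C)})\hookrightarrow\NM_t(G_W)$ is the composite of these inclusions. By functoriality of reduced homology, the induced map is the composite of the injections, hence injective.

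\textbf{Step 2: identify the starting complex.} Since $G_{E(C)}$ consists of the cycle $C$ together with isolated vertices, and isolated vertices do not affect matchings, we have $\NM_t(G_{E(C)})=\NM_t(C)$ as complexes on $E(C)$. By Lemma~\ref{lem:cyclic-core}, $\widetilde H_{2t-3}(\NM_t(C);K)\neq0$. Step 1 then gives $\widetilde H_{2t-3}(\NM_t(G_W);K)\ne0$.

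\textbf{Step 3: the Leray number.} The restriction property stated in the introduction gives $(\NM_t(G_W))_U=\NM_t(G_U)$ for $U\subseteq W$. Combined with \cite{HL22}*{Theorem~1.1} applied to each bipartite graph $G_U$, this yields $L_K(\NM_t(G_W))\le 2t-2$. Conversely, taking the induced subcomplex on the full vertex set $W$ shows that $\widetilde H_{2t-3}(\NM_t(G_W);K)$ is nonzero. Hence the Leray number is at least $(2t-3)+1=2t-2$. Taking $W=E(G)$ gives the final claim.

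There is no serious obstacle. The one point needing care is that every intermediate graph is a spanning subgraph of $G$, hence bipartite, so Theorem~\ref{thm:edge-addition} applies at every step. A second small point is that the vertex set of $\NM_t(G_W)$ is all of $W$: each edge is a face because $t\ge2$, so the full induced subcomplex is the complex itself.
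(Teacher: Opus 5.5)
Your proposal is correct and takes the same route as the paper. You build a chain of spanning subgraphs from $G_{E(C)}$ to $G_W$ and compose the injections from Theorem~\ref{thm:edge-addition}, start from Lemma~\ref{lem:cyclic-core}, and get the matching upper bound by applying the Holmsen--Lee vanishing theorem to every induced subcomplex $\NM_t(G_U)$.
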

\begin{proof}
	Let $W\subseteq E(G)$ satisfy $E(C)\subseteq W$. Start with the spanning subgraph $G_{E(C)}$ and add the edges of $W\setminus E(C)$ one at a time. Every intermediate graph is a spanning subgraph of the bipartite graph $G$. Since isolated vertices do not affect the non-matching complex, we have
	\[
	\NM_t(G_{E(C)})=\NM_t(C).
	\]
	Applying Theorem~\ref{thm:edge-addition} successively shows that the inclusion
	\[
	\NM_t(C)=\NM_t(G_{E(C)})\hookrightarrow\NM_t(G_W)
	\]
	induces an injection on reduced homology in degree $2t-3$. By Lemma~\ref{lem:cyclic-core}, this gives
	\[
	0\ne
	\widetilde H_{2t-3}(\NM_t(C);K)
	\hookrightarrow
	\widetilde H_{2t-3}(\NM_t(G_W);K).
	\]
	It follows that
	\[
	L_K(\NM_t(G_W))\geq2t-2.
	\]
On the other hand, for every \(U\subseteq W\), the graph \(G_U\) is bipartite and
\[
(\NM_t(G_W))_U=\NM_t(G_U).
\]
Thus \cite{HL22}*{Theorem~1.1}, applied to every \(G_U\), gives
\[
L_K(\NM_t(G_W))\leq 2t-2.
\]
Hence
\[
L_K(\NM_t(G_W))=2t-2.
\]
\end{proof}

We now translate the preceding homological results into algebraic consequences for the Stanley-Reisner ideal of the non-matching complex. For an ideal $I\subseteq R$, we write $\depth I$ for its depth as an $R$-module. The following corollary collects these consequences.

\begin{coro}\label{cor:algebraic-consequences}
	Let $G$ be a bipartite graph, let $t\geq2$, and suppose that $G$ contains a cycle $C$ of length $\ell\geq2t$. Set $I=I_{\NM_t(G)}$ and $n=|E(G)|$. Then
	\[
	\reg I=2t-1,\quad
	\pd I\geq n-2t+1,\quad\text{and}\quad
	\depth I\leq2t-1.
	\]
	Moreover, for every
	\[
	\ell-2t+1\leq i\leq n-2t+1,
	\]
	we have
	\[
	\beta_{i,i+2t-1}(I)
	\geq
	\binom{n-\ell}{i+2t-1-\ell}.
	\]
	Consequently,
	\[
	t_0(I)=t,
	\qquad
	t_i(I)=i+2t-1
	\quad
	(\ell-2t+1\leq i\leq n-2t+1).
	\]
\end{coro}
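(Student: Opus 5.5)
The plan is to read everything off Hochster's formula \eqref{eq:hochster}, using Theorem~\ref{thm:cycle-persistence} as the source of nonvanishing homology. Note first that $I\ne0$, because the cycle $C$ of length $\ell\geq2t$ contains a $t$-matching.

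\emph{Regularity.} By \eqref{eq:regularity-leray} and Theorem~\ref{thm:cycle-persistence}, we get $\reg I=L_K(\NM_t(G))+1=2t-1$.

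\emph{Betti lower bound.} Fix $i$ with $\ell-2t+1\leq i\leq n-2t+1$ and set $j=i+2t-1$, so that $\ell\leq j\leq n$. For every $W\subseteq E(G)$ with $E(C)\subseteq W$ and $|W|=j$, Theorem~\ref{thm:cycle-persistence} gives $\widetilde H_{2t-3}(\NM_t(G_W);K)\ne0$. Since $|W|-i-2=2t-3$, \eqref{eq:hochster} yields $\beta_{i,W}(I)\geq1$. Such $W$ correspond to choosing $j-\ell$ edges from the $n-\ell$ edges outside $C$, so there are $\binom{n-\ell}{j-\ell}$ of them. Summing over these $W$ gives $\beta_{i,i+2t-1}(I)\geq\binom{n-\ell}{i+2t-1-\ell}$, which is positive in this range.

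\emph{Projective dimension and depth.} Taking $i=n-2t+1$ (that is, $W=E(G)$) gives $\beta_{n-2t+1,n}(I)\geq1$, hence $\pd I\geq n-2t+1$. Auslander--Buchsbaum over $R$, which has dimension $n$, then gives $\depth I=n-\pd I\leq2t-1$.

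\emph{Maximal shifts.} For $\ell-2t+1\leq i\leq n-2t+1$, we have $\beta_{i,i+2t-1}(I)\ne0$, so $t_i(I)\geq i+2t-1$. Since $\reg I=2t-1$, every nonzero $\beta_{i,j}$ satisfies $j-i\leq2t-1$, so $t_i(I)\leq i+2t-1$ and equality holds.

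For $t_0(I)$, observe that the minimal generators of $I$ are the $x_W$ with $W$ a $t$-matching, all of degree $t$. Such generators exist, so $\beta_{0,j}(I)\ne0$ exactly when $j=t$, and $t_0(I)=t$.

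\emph{Expected obstacle.} Everything is bookkeeping except verifying the ranges: that the index window is nonempty and nonnegative, and that the degree $|W|-i-2$ matches $2t-3$. The first needs $\ell\leq n$, which holds, and $\ell-2t+1\geq1>0$ when $\ell\geq2t$. The second is the computation above. In the case $\ell=2t$ the window is $1\leq i\leq n-2t+1$, so together with $t_0(I)$ this determines all maximal shifts through degree $n-2t+1$, as claimed in the introduction.
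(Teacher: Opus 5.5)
Your proposal is correct and follows the paper's proof essentially step for step. You get the regularity from \eqref{eq:regularity-leray} and Theorem~\ref{thm:cycle-persistence}, the Betti bound from Hochster's formula applied to the $\binom{n-\ell}{i+2t-1-\ell}$ supersets of $E(C)$, $\pd I$ from $W=E(G)$ and the depth from Auslander--Buchsbaum, and the maximal shifts by sandwiching against $\reg I=2t-1$.
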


\begin{proof}
Since the cycle $C$ contains a $t$-matching, we have $I\ne0$. Hence \eqref{eq:regularity-leray} and Theorem~\ref{thm:cycle-persistence} give
\[
\reg I=L_K(\NM_t(G))+1=2t-1.
\]

To prove the asserted lower bound on the graded Betti numbers, fix an integer $i$ such that
\[
\ell-2t+1\leq i\leq n-2t+1.
\]
For every $W\subseteq E(G)$ satisfying
\[
E(C)\subseteq W \quad \text{and} \quad |W|=i+2t-1,
\]
Theorem~\ref{thm:cycle-persistence} and Hochster's formula \eqref{eq:hochster} give
\[
\beta_{i,W}(I)=\dim_K\widetilde H_{2t-3}(\NM_t(G_W);K)\neq 0.
\]
Such a set $W$ is obtained by choosing $i+2t-1-\ell$ edges from $E(G)\setminus E(C)$. Hence there are exactly
\[
\binom{n-\ell}{i+2t-1-\ell}
\]
such sets. Since each of these sets $W$ has cardinality $i+2t-1$ and satisfies $\beta_{i,W}(I)\ne0$, the definition of the graded Betti numbers gives
\[
\beta_{i,i+2t-1}(I) \geq \binom{n-\ell}{i+2t-1-\ell}.
\]

Taking $i=n-2t+1$, we obtain
\[
\beta_{n-2t+1,n}(I)\ne0,
\]
so that $\pd I\geq n-2t+1$. The Auslander-Buchsbaum formula \cite{BH98}*{Theorem~1.3.3} therefore gives
\[
\depth I=n-\pd I\leq2t-1.
\]

Since the minimal generators of \(I=I_{\NM_t(G)}\) correspond to the \(t\)-matchings of \(G\), they all have degree \(t\). Hence
\[
t_0(I)=t.
\]
Moreover, the preceding estimate gives
\[
\beta_{i,i+2t-1}(I)>0
\qquad
(\ell-2t+1\leq i\leq n-2t+1).
\]
Consequently, for every
\(\ell-2t+1\leq i\leq n-2t+1\),
\[
i+2t-1 \leq t_i(I) \leq i+\reg I = i+2t-1.
\]
Thus
\[
t_i(I)=i+2t-1
\qquad
(\ell-2t+1\leq i\leq n-2t+1).
\]
\end{proof}
In particular, if \(G\) contains a \(2t\)-cycle, then
\[
t_0(I)=t,
\qquad
t_i(I)=i+2t-1
\quad
(1\leq i\leq n-2t+1),
\]
so the maximal shifts are determined through homological degree \(n-2t+1\).

%%%%%%%%%%%%%%%%%%%%%%%%%%%%%%%%%%%%%%%%%%%%%%%%%%%%%%%%

\section{Complete bipartite graphs and chessboard complexes}
In this section we identify the Stanley-Reisner ideals associated with complete bipartite graphs as facet ideals of pure skeleta of chessboard complexes, and then combine
Corollary~\ref{cor:algebraic-consequences} with a uniform depth estimate to determine their homological invariants.

Let $2\leq t\leq r\leq s$, and let $K_{r,s}$ be the complete bipartite graph with parts of sizes $r$ and $s$. We identify $E(K_{r,s})$ with $[r]\times[s]$, where $[m]=\{1,\ldots,m\}$, and set
\[
R_{r,s}=K[x_{ij}:1\leq i\leq r,\ 1\leq j\leq s],
\qquad
J_{t;r,s}=I_{\NM_t(K_{r,s})}\subseteq R_{r,s}.
\]
The \emph{chessboard complex} $\Delta_{r,s}$ is the matching complex of $K_{r,s}$; its faces are the placements of pairwise non-attacking rooks on an $r\times s$ chessboard \cite{BLVZ94}. For further background on matching and chessboard complexes, see \cite{Wachs03}. If a simplicial complex $\Delta$ has facets $F_1,\ldots,F_m$, its \emph{facet ideal} \cite{Faridi02} is
\[
F(\Delta)=\left( \prod_{v\in F_k}x_v:1\leq k\leq m \right).
\]
The \emph{pure $d$-skeleton} of $\Delta$, denoted by $\Delta^{[d]}$, is the subcomplex generated by the $d$-dimensional faces of $\Delta$. Under the above identification, the $t$-matchings of $K_{r,s}$ are precisely the facets of $\Delta_{r,s}^{[t-1]}$. Hence
\[
J_{t;r,s}=F\bigl(\Delta_{r,s}^{[t-1]}\bigr).
\]
In particular, $\Delta_{r,s}$ is pure of dimension $r-1$ and
\[
J_{r;r,s}=F(\Delta_{r,s}).
\]
For a nonzero proper homogeneous ideal \(Q\) in a polynomial ring \(S\), we use the standard relations
\[
\reg(S/Q)=\reg Q-1
\qquad\text{and}\qquad
\depth(S/Q)=\depth Q-1.
\]

Facet ideals of chessboard complexes were studied by Jiang, Zhao, Wang, and Zhu \cite{JZWZ23}. They proved
\[
\reg\bigl(R_{r,s}/F(\Delta_{r,s})\bigr)\geq 2r-2
\qquad\text{and}\qquad
\depth\bigl(R_{r,s}/F(\Delta_{r,s})\bigr)\geq 2r-2,
\]
showed that both inequalities are equalities when \(r\leq3\), and conjectured that equality holds for all \(2\leq r\leq s\).

Since \(K_{r,s}\) contains a cycle of length \(2t\), Corollary~\ref{cor:algebraic-consequences}, together with the relations above, gives
\[
\reg(R_{r,s}/J_{t;r,s})=2t-2
\qquad\text{and}\qquad
\depth(R_{r,s}/J_{t;r,s})\leq2t-2.
\]
For \(t=r\), the depth lower bound in \cite{JZWZ23}*{Corollary~5.8}, together with these formulas, confirms \cite{JZWZ23}*{Conjecture~5.15}.

In fact, for every \(2\leq t\leq r\), we have
\[
\depth(R_{r,s}/J_{t;r,s})=2t-2.
\]
Equivalently,
\[
\depth J_{t;r,s}=2t-1.
\]
To prove this for the whole family \(2\leq t\leq r\), including the range \(t<r\), we extend the induction used in the proof of \cite{JZWZ23}*{Theorem~5.7}. The resulting colon ideals are sums of rook ideals with varying numbers of rooks and with specified columns excluded. We therefore introduce the following notation.

For positive integers $a, b$, set
\[
R_{a,b}=K[x_{ij}:1\leq i\leq a,\ 1\leq j\leq b].
\]
For $C\subseteq[b]$ and $p\geq 0$, let $J_p^C(a,b)\subseteq R_{a,b}$ be the ideal generated by the monomials corresponding to placements of $p$ non-attacking rooks avoiding the columns in $C$:
\[
J_p^C(a,b)=\left(x_{i_1j_1}\cdots x_{i_pj_p}: \substack{
	i_1,\ldots, i_p\in[a]\text{ are pairwise distinct},\\
	j_1,\ldots, j_p\in[b]\setminus C\text{ are pairwise distinct}
}
\right).
\]
We use the conventions $J_0^C(a,b)=R_{a,b}$ and $J_p^C(a,b)=0$ if $p>a$ or $p>b-|C|$. With the notation above, we have
\[
J_{t;r,s}=J_t^\varnothing(r,s).
\]

\begin{lem}\label{lem:zhu-general}
Let
\[
I=\sum_{\lambda=1}^q J_{p_\lambda}^{C_\lambda}(a, b)\subseteq R_{a,b},
\]
where $q\geq 1$, $C_\lambda\subseteq[b]$, $p_\lambda\geq1$, and every summand is nonzero. Put
\[
p_0=\min_{1\leq \lambda \leq q}p_\lambda.
\]
Then
\[
\depth(R_{a,b}/I)\geq 2p_0-2.
\]
\end{lem}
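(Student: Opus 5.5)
Our plan is to argue by induction, following the colon-ideal strategy of \cite{JZWZ23}*{Theorem~5.7}. The two basic tools are the depth lemma for the exact sequence
\[
0\longrightarrow R/(I:x)(-1)\xrightarrow{\;\cdot x\;} R/I\longrightarrow R/(I+(x))\longrightarrow 0,
\]
which gives $\depth R/I\geq\min\{\depth R/(I:x),\ \depth R/(I+(x))\}$, and the observation that adjoining free variables does not lower depth. We set $R=R_{a,b}$ and induct on a suitable measure: first the number $ab$ of variables, then the number of variables actually occurring in generators of $I$. For the base case, suppose some $p_\lambda=1$. Then $p_0=1$ and the bound $\depth\geq 0$ is trivial. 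So we may assume $p_0\geq 2$.

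For the inductive step we pick a variable $x=x_{ij}$ with column $j\notin C_\lambda$ for some $\lambda$, so that $x$ divides some generator. We then compute both ideals in the sequence.

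The ideal $I+(x)$ is $(x)+I'$, where $I'$ is the same kind of sum but with the square $(i,j)$ removed. To keep the family closed under this operation, we may need to allow the excluded positions in each summand to be an arbitrary set of squares rather than whole columns. If this becomes awkward, we instead process a whole column at once: we take the variables $x_{1j},\dots,x_{aj}$ successively, or equivalently sum over the filtration $I\subseteq I+(x_{1j})\subseteq\cdots$. At the end of this process the column $j$ has been added to $C_\lambda$. The final quotient is then $R/(I''+(x_{1j},\dots,x_{aj}))$. Its depth equals $\depth R_{a,b}/I''$ minus $a$, computed in the smaller ring, plus the accounting of the $a$ killed variables. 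This exactly matches passing to $R_{a,b-1}$ with $I''$ of the same shape, and induction on $b$ gives depth $\geq 2p_0-2$, with the minimum not decreasing.

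For the colon ideals, we use that $J_p^C(a,b):x_{ij}$ contains $J_{p-1}^{C\cup\{j\}}$ restricted to rows other than $i$, together with $J_p^C$ itself. More precisely it equals $J_p^C+x$-free placements of $p-1$ rooks avoiding row $i$ and column $j$. Hence $(I:x)$ is again a sum of rook-type ideals, now with minimum rook number $\geq p_0-1$. Note that summands lacking $x$ contribute only themselves. The shift $(-1)$ together with the free variable $x$ gives one unit back: since $x$ is a nonzerodivisor on $R/(I:x)$ when no generator of $(I:x)$ involves $x$, we get $\depth R/(I:x)\geq 1+\depth$ of the ideal in the remaining variables. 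We also need a second unit. For this we apply the same trick to a further variable $y=x_{i'j'}$ in row $i$ or column $j$, whose role in the colon ideal is as a free-ish direction, or we perform the colon by a product $x_{ij}x_{i'j'}$ of a 2-matching. The goal is that each drop of $p$ by one costs at most $2$ in the induction while the colon contributes $+2$. We would aim for the cleaner route of colon by a single variable, followed by the observation that row $i$ and column $j$ variables other than $x$ form a regular sequence of length $\geq 1$ modulo the colon part coming from the reduced board.

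The main obstacle is making the family of ideals closed under both operations while keeping the recursion bookkeeping right. Colon and sum introduce row exclusions and single-square exclusions in addition to column exclusions, and summands with different $p_\lambda$ interact. We would first try to enlarge the statement: allow each summand its own excluded rows and columns, giving rook ideals on sub-boards $A_\lambda\times B_\lambda$. We would then prove $\depth\geq 2p_0-2$ for that family by the same double induction. The key check is that after coloning by $x_{ij}$, the summands with rook number exactly $p_0$ either keep $p_0$ or drop to $p_0-1$ only together with a gain of $2$ in depth. Verifying this gain, by exhibiting the two extra regular elements, is the delicate step.
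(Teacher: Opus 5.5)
\textbf{There is a genuine gap.} Your proposal does not yet prove the lemma. The step you call delicate, recovering two units of depth whenever the rook number drops from $p_0$ to $p_0-1$, is the entire content of the induction, and the mechanisms you suggest do not supply it.

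\emph{Single-variable colon.} The ideal $J_p^C(a,b):x_{ij}$ is $J_p^C(a,b)$ plus the ideal of $(p-1)$-rook placements avoiding row $i$, column $j$ and $C$. The first part is not absorbed by the second: for $p=2$, the generator $x_{ij'}x_{i'j}$ is divisible by no generator of the second part. So $I:x_{ij}$ leaves your family, and the other variables of row $i$ and column $j$ are not free modulo it. The only gain is the single free variable $x_{ij}$, which gives $1+2(p_0-1)-2=2p_0-3$, one short.

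\emph{Whole-column filtration.} Processing a column via $I\subseteq I+(x_{1j})\subseteq\cdots$ treats only the branch in which all of $x_{1j},\dots,x_{aj}$ are killed. At every step the depth lemma also produces a colon branch, which you never analyze. Moreover, coloning by variables of column $j$ creates row exclusions.

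\emph{Enlarged family.} Passing to sub-boards $A_\lambda\times B_\lambda$ might restore closure, but by itself it does not produce the two extra regular elements.

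The missing idea is to split along \emph{all} variables of one row at once. Iterating the depth lemma over $x_{a1},\dots,x_{ab}$ (this is \cite{JZWZ23}*{Corollary~2.12}) gives
\[
\depth(R_{a,b}/I)\geq\min_{U\subseteq[b]}\depth\frac{R_{a,b}}{(I:x_{a,U})+P_{U^c}},\qquad P_{U^c}=(x_{av}:v\notin U).
\]
In each branch row $a$ disappears: the $x_{av}$ with $v\notin U$ are killed and the $x_{au}$ with $u\in U$ are free. What remains is a sum $H_U$ of two kinds of ideals:
\begin{itemize}
\item $J_{p_\lambda-1}^{C_\lambda\cup\{u\}}(a-1,b)$ for $u\in U\setminus C_\lambda$;
\item $J_{p_\lambda}^{C_\lambda}(a-1,b)$, which is needed only when $U\subseteq C_\lambda$.
\end{itemize}
These involve column exclusions only and have parameters at least $p_0-1$, so induction on $a$ applies. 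Induction together with the $|U|$ free variables settles every case except $U=\{u\}$ with a parameter drop.

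In that case one uses the containment $J_p^C(a-1,b)\subseteq J_{p-1}^{C\cup\{u\}}(a-1,b)$ for $u\notin C$. It holds because row $a$ is already gone, so only the rook in column $u$ must be deleted; this is exactly what fails for your single-variable colon. It shows that every summand of $H_U$ avoids column $u$. Hence the whole column $x_{1u},\dots,x_{au}$ is free, and these $a\geq2$ extra units of depth are precisely the two units you were looking for.
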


\begin{proof}
We prove the assertion by induction on $a$, simultaneously for all $b$, all $q$, and all admissible collections $\{(p_\lambda,C_\lambda)\}_{\lambda=1}^q$. If $p_0=1$, the required lower bound is $0$, and the assertion follows because $I$ is a proper ideal. We may therefore assume that $p_0\geq2$. Since every summand is nonzero, necessarily $a\geq p_0\geq 2$. For $U\subseteq[b]$, set
\[
x_{a,U}=\prod_{u\in U}x_{au}, \quad P_{U^c}=(x_{av}:v\notin U).
\]
By \cite{JZWZ23}*{Corollary~2.12}, applied with $V=\{x_{a1},\ldots, x_{ab}\}$ as in the proof of \cite{JZWZ23}*{Theorem~5.7}, it is enough to prove
\[
\depth\frac{R_{a,b}}{(I:x_{a,U})+P_{U^c}} \geq 2p_0-2 \quad \text{for every }U\subseteq[b].
\]
Set
\[
R'=K[x_{ij}:1\leq i<a,\ 1\leq j\leq b],
\]
and regard ideals of $R'$ as their extensions to $R_{a,b}$. Fix $p\geq2$ and $C\subseteq [b]$. Separating the monomial generators of $J_p^C(a,b)$ according to whether they avoid row $a$ or use $x_{av}$ for some $v\in[b]\setminus C$ gives
\[
J_p^C(a,b) =J_p^C(a-1, b)+\sum_{v\in[b]\setminus C} x_{av}J_{p-1}^{C\cup\{v\}}(a-1,b).
\]
Taking the colon by $x_{a,U}$ and then adding $P_{U^c}$ yields
\[
(J_p^C(a,b):x_{a,U})+P_{U^c}=P_{U^c}+J_p^C(a-1,b)+\sum_{u\in U\setminus C}J_{p-1}^{C\cup\{u\}}(a-1,b).
\]
For every $v\in[b]\setminus C$, we have
\[
J_p^C(a-1,b)\subseteq J_{p-1}^{C\cup\{v\}}(a-1,b).
\]
Indeed, for each generator of $J_p^C(a-1,b)$, delete the rook in column $v$ if it occurs, and otherwise delete any one of the rooks. The resulting monomial is a generator of $J_{p-1}^{C\cup\{v\}}(a-1,b)$. Consequently, if $U\not\subseteq C$, then $U\setminus C\neq\varnothing$, so the term $J_p^C(a-1,b)$ in the preceding equality is redundant. Therefore
\begin{equation}\label{eq:single-rook-colon}
	(J_p^C(a,b):x_{a,U})+P_{U^c}=P_{U^c}+\varepsilon_U^C J_p^C(a-1,b)+\sum_{u\in U\setminus C}J_{p-1}^{C\cup\{u\}}(a-1,b),
\end{equation}
where $\varepsilon_U^C=1$ if $U\subseteq C$, and $\varepsilon_U^C=0$ otherwise.

Since colon by a monomial distributes over sums of monomial ideals, \eqref{eq:single-rook-colon} yields
\[
(I:x_{a,U})+P_{U^c}=P_{U^c}+H_U,
\]
where $H_U\subseteq R'$ is a sum of ideals of the form $J_p^C(a-1,b)$; the sum may be zero. After zero summands are omitted, every parameter $p$ occurring in $H_U$ satisfies
\[
p\geq p_0-1.
\]
If $H_U=0$, then
\[
R_{a,b}/P_{U^c}\cong R'[x_{au}:u\in U]
\]
has depth $(a-1)b+|U|$. Since every summand of $I$ is nonzero, $a, b\geq p_0$, and hence
\[
(a-1)b+|U|\geq (p_0-1)p_0\geq 2p_0-2.
\]
We may therefore assume that $H_U\neq 0$.

If $U=\varnothing$, then the quotient is $R'/H_U$, and no parameter decreases in \eqref{eq:single-rook-colon}. Since $H_U\neq 0$, necessarily $p_0<a$, so a summand with parameter $p_0$ remains nonzero after deleting the last row. Hence the least parameter occurring in $H_U$ is $p_0$, and the induction hypothesis gives
\[
\depth(R'/H_U)\geq 2p_0-2.
\]
Now assume that $U\neq \varnothing$, and let $p_U$ be the least parameter occurring in $H_U$. Since the variables $x_{au}$ with $u\in U$ are free, it follows from the induction hypothesis that
\[
\depth\frac{R_{a,b}}{P_{U^c}+H_U} = |U|+\depth(R'/H_U) \geq |U|+2p_U-2.
\]
Since $p_U\geq p_0-1$, the required bound follows unless
\[
U=\{u\} \quad\text{and}\quad p_U=p_0 -1.
\]

It remains to consider this case. By \eqref{eq:single-rook-colon}, we see that every nonzero summand of $H_U$ is either of the form
\[
J_{p_\lambda-1}^{C_\lambda\cup\{u\}}(a-1,b)
\]
or of the form $J_{p_\lambda}^{C_\lambda}(a-1,b)$ with $u\in C_\lambda$. Hence every summand of $H_U$ avoids column $u$. Since $P_{U^c}$ contains all variables in row $a$ except $x_{au}$, we obtain that none of the variables $x_{1u}, \ldots, x_{au}$ occurs in a minimal monomial generator of $P_{U^c}+H_U$. Set
\[
R''=K[x_{ij}:1\leq i<a,\ j\in[b]\setminus\{u\}].
\]
After deleting column $u$ and relabelling the remaining columns, $H_U$ is the extension of a nonzero sum $H'_U\subseteq R''$ of ideals of the form $J_p^C(a-1,b-1)$, whose least parameter is $p_U=p_0-1$. Therefore
\[
\frac{R_{a,b}}{P_{U^c}+H_U} \cong (R''/H'_U)[x_{1u}, \ldots, x_{au}].
\]
The induction hypothesis gives
\[
\depth\frac{R_{a,b}}{P_{U^c}+H_U} \geq a+2(p_0-1)-2 \geq 2p_0- 2,
\]
since $a \geq 2$. This completes the induction.
\end{proof}

The depth estimate required above is now an immediate consequence.

\begin{coro}\label{cor:zhu-bound}
	For \(2\leq t\leq r\leq s\),
	\[
	\depth J_{t;r,s}\geq 2t-1.
	\]
\end{coro}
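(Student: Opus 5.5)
This follows directly from Lemma~\ref{lem:zhu-general} applied to the single summand $J_{t;r,s}=J_t^\varnothing(r,s)$. We take $a=r$, $b=s$, $q=1$, $p_1=t$ and $C_1=\varnothing$. The hypotheses of the lemma hold because $p_1=t\geq2\geq1$. The single summand is nonzero since $t\leq r\leq s$: the diagonal placement $x_{11}x_{22}\cdots x_{tt}$ is a generator, so $J_t^\varnothing(r,s)\neq0$. Hence $p_0=t$, and Lemma~\ref{lem:zhu-general} gives
\[
\depth\bigl(R_{r,s}/J_{t;r,s}\bigr)\geq 2t-2.
\]

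To pass from the quotient to the ideal, we use the standard relation $\depth(S/Q)=\depth Q-1$ recalled earlier in this section. It applies to nonzero proper homogeneous ideals. Here $J_{t;r,s}$ is proper because it is generated in degree $t\geq2$, and it is nonzero as shown above. This yields $\depth J_{t;r,s}=\depth(R_{r,s}/J_{t;r,s})+1\geq 2t-1$.

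There is no real obstacle, since all the inductive work has already been done in Lemma~\ref{lem:zhu-general}. The only points to check are that the conventions make the summand nonzero, i.e.\ $p\leq a$ and $p\leq b-|C|$, and that the quotient-to-ideal depth shift is applied with the correct sign. Combining this with the upper bound $\depth J_{t;r,s}\leq 2t-1$ from Corollary~\ref{cor:algebraic-consequences} will then give equality. That corollary applies because $K_{r,s}$ contains a $2t$-cycle.
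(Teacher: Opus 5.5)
Your proposal is correct and follows the paper's proof exactly. You apply Lemma~\ref{lem:zhu-general} with $q=1$, $a=r$, $b=s$, $p_1=t$, $C_1=\varnothing$ to get $\depth(R_{r,s}/J_{t;r,s})\geq 2t-2$, then shift by one via $\depth J=\depth(R_{r,s}/J)+1$; your extra checks that the summand is nonzero and the ideal is proper are left implicit in the paper.
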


\begin{proof}
Applying Lemma~\ref{lem:zhu-general} with $q=1$, $a=r$, $b=s$, $p_1=t$, and $C_1=\varnothing$ gives
\[
\depth(R_{r,s}/J_{t;r,s})\geq2t-2.
\]
By the standard depth relation recalled above,
\[
\depth J_{t;r,s}=\depth(R_{r,s}/J_{t;r,s})+1 \geq 2t-1.
\]
\end{proof}

\begin{rem}
	Lemma~\ref{lem:zhu-general} also recovers \cite{JZWZ23}*{Theorem~5.7}. Indeed, for \(A\in\binom{[b]}{a}\), let \(D_A\) be the chessboard complex on the \(a\times a\) subboard determined by the columns in \(A\). Viewing \(F(D_A)\) as an ideal of \(R_{a,b}\) by extension, we have
	\[
	F(D_A)=J_a^{[b]\setminus A}(a,b).
	\]
	Therefore, applying Lemma~\ref{lem:zhu-general} with \(p_\lambda=a\) and \(C_\lambda=[b]\setminus A_\lambda\) recovers \cite{JZWZ23}*{Theorem~5.7}.
\end{rem}

Combining Corollaries~\ref{cor:algebraic-consequences} and~\ref{cor:zhu-bound}, we obtain the depth and projective dimension of \(J_{t;r,s}\), as well as all its maximal shifts. We can further compute the graded Betti number at the maximal shift in the last homological degree and show that this Betti number is the unique extremal one.

Following Bayer, Charalambous, and Popescu \cite{BCP99}, a nonzero graded Betti number \(\beta_{i,i+j}(J_{t;r,s})\) is called \emph{extremal} if
\[
\beta_{p,p+q}(J_{t;r,s})=0
\]
for every pair \((p,q)\ne(i,j)\) satisfying \(p\geq i\) and \(q\geq j\). The complete statement is as follows.

\begin{thm}\label{thm:complete-bipartite}
Let $2 \leq t \leq r \leq s$. Then
\[
\reg J_{t;r,s}=\depth J_{t;r,s}=2t-1, \quad
\pd J_{t;r,s}=rs-2t+1.
\]
Moreover,
\[
t_0(J_{t;r,s})=t,
\quad
t_i(J_{t;r,s})=i+2t-1 \quad(1\leq i \leq rs-2t+1).
\]
Finally,
\[
\beta_{rs-2t+1,rs}(J_{t;r,s})=\binom{r-1}{t-1}\binom{s-1}{t-1},
\]
and this is the unique extremal Betti number of $J_{t;r,s}$.
\end{thm}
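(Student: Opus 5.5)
The first assertions follow directly from earlier results. Since $K_{r,s}$ with $2\le t\le r\le s$ contains a $2t$-cycle, Corollary~\ref{cor:algebraic-consequences} with $\ell=2t$ and $n=rs$ gives the following:
\[
\reg J_{t;r,s}=2t-1,\qquad \pd J_{t;r,s}\ge rs-2t+1,\qquad \depth J_{t;r,s}\le 2t-1,
\]
\[
t_0=t,\qquad t_i=i+2t-1\quad (1\le i\le rs-2t+1).
\]
Corollary~\ref{cor:zhu-bound} gives $\depth J_{t;r,s}\ge 2t-1$, so $\depth J_{t;r,s}=2t-1$. The Auslander--Buchsbaum formula then gives $\pd J_{t;r,s}=rs-2t+1$. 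This range of $i$ exhausts all homological degrees, so all maximal shifts are determined.

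Uniqueness of the extremal Betti number is formal. Put $P=rs-2t+1$. By the corollary, $\beta_{P,P+2t-1}\ne0$. Suppose $\beta_{i,i+j}\ne0$ is extremal with $(i,j)\ne(P,2t-1)$. We have $i\le P=\pd$ and $j\le \reg=2t-1$, so the pair $(P,2t-1)$ satisfies $P\ge i$ and $2t-1\ge j$, yet is different from $(i,j)$ and nonzero. This contradicts extremality. Hence $\beta_{P,P+2t-1}=\beta_{rs-2t+1,rs}$ is the unique extremal Betti number.

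It remains to compute the number itself. By Hochster's formula \eqref{eq:hochster}, the only $W$ with $|W|=rs$ is $E(K_{r,s})$, so
\[
\beta_{rs-2t+1,rs}=\dim_K\widetilde H_{2t-3}(\Delta),\qquad \Delta=\NM_t(K_{r,s}).
\]
I would first show that $\Delta$ has homology only in degree $2t-3$. Again by Hochster, $\widetilde H_k(\Delta;K)\cong \operatorname{Tor}_{rs-k-2}(J,K)_{\mathbf 1_E}$. For $k<2t-3$ the homological index exceeds $\pd=rs-2t+1$, so these groups vanish. For $k\ge 2t-2$ they vanish by \cite{HL22}*{Theorem~1.1}. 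Therefore
\[
\dim\widetilde H_{2t-3}(\Delta)=|\widetilde\chi(\Delta)|,
\]
and the problem reduces to an Euler characteristic computation. This is the step I expect to be the main obstacle.

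To compute $\widetilde\chi(\Delta)$ I would use König's theorem. A set $W$ of edges of the bipartite graph satisfies $\nu(G_W)<t$ if and only if $W$ is covered by some set $S$ of $t-1$ vertices, i.e.\ $W\subseteq E(S)$, the set of edges meeting $S$. So $\Delta$ is the union of the full simplices $\langle E(S)\rangle$ over $(t-1)$-subsets $S\subseteq V(K_{r,s})$. These simplices are not contractible in intersection in any simple pattern, so I would apply inclusion--exclusion over the lattice generated by the sets $E(S)$, or equivalently compute
\[
\widetilde\chi(\Delta)=\sum_{W:\ \nu(G_W)\ge t}(-1)^{|W|}.
\]
In that sum, an alternating sum over all $W$ with fixed minimum vertex cover structure cancels unless $W$ is very rigid. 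The aim is to reduce to counting configurations indexed by a choice of $t-1$ rows among rows $2,\dots,r$ and $t-1$ columns among columns $2,\dots,s$. Such a description is suggested by the answer $\binom{r-1}{t-1}\binom{s-1}{t-1}$, which mirrors the shellability-type count for the top homology of $\partial C(2t,2t-2)$-like spheres anchored at a fixed vertex $(1,1)$.

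As a fallback, I would induct using the Mayer--Vietoris sequences from the proof of Theorem~\ref{thm:edge-addition}. There, concentration of homology in one degree turns the sequences into additive recursions
\[
\dim\widetilde H_{2t-3}(\NM_t(H+e))=\dim\widetilde H_{2t-3}(\NM_t(H))+\dim\widetilde H_{2t-4}(\lk(e)).
\]
Here the link of $e=(i,j)$ is the join-type complex built from $\NM_{t-1}$ of $K_{r,s}$ with row $i$ and column $j$ deleted. One would add the edges of row $r$ and column $s$ one at a time, which should produce the Pascal-type recursion satisfied by $\binom{r-1}{t-1}\binom{s-1}{t-1}$. The catch is that the intermediate graphs $H$ are not complete bipartite. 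So either the concentration statement must first be established for these intermediate graphs, or the recursion should be run at the level of Euler characteristics, which are additive regardless of where the homology sits.
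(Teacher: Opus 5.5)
\textbf{Gap: the value $\binom{r-1}{t-1}\binom{s-1}{t-1}$ is never proved.}

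Everything up to that value is correct and is the paper's own argument. That covers Corollary~\ref{cor:algebraic-consequences} with $\ell=2t$, the depth bound from Corollary~\ref{cor:zhu-bound}, Auslander--Buchsbaum, and the formal extremality argument.

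Your reduction $\beta_{rs-2t+1,rs}(J_{t;r,s})=\dim_K\widetilde H_{2t-3}(\NM_t(K_{r,s});K)=|\widetilde\chi(\NM_t(K_{r,s}))|$ is also valid. Vanishing below degree $2t-3$ comes from $\pd$, and above it from Holmsen--Lee; neither is circular.

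However, you never compute $\widetilde\chi(\NM_t(K_{r,s}))$, and that computation is the entire content of the formula. The K\"onig/inclusion--exclusion paragraph only says what the cancellation is hoped to produce (``the aim is to reduce to \dots''). It specifies no involution and no count.

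The Mayer--Vietoris fallback does not close either. The link of $e=\{i,j\}$ in $\NM_t(H+e)$ is not a join with $\NM_{t-1}(K_{r-1,s-1})$. It consists of the $W\subseteq E(H)$ with $\nu(H_W)<t$ such that the edges of $W$ meeting neither $i$ nor $j$ contain no $(t-1)$-matching. This is an intersection of $\NM_t(H)$ with a join. You have no handle on its Euler characteristic, nor on that of the intermediate non-complete graphs $H$. So the relation $\widetilde\chi(\NM_t(H+e))=\widetilde\chi(\NM_t(H))-\widetilde\chi(\lk(e))$ only moves the problem to complexes that are no easier.

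The paper closes this step with a citation rather than a computation. It uses Linusson--Shareshian--Welker \cite{LSW08}*{Theorem~1.4}: $\NM_t(K_{r,s})$ is homotopy equivalent to a wedge of $\binom{r-1}{t-1}\binom{s-1}{t-1}$ spheres of dimension $2t-3$. Combined with Hochster's formula, this gives the Betti number at once and makes your concentration step unnecessary. Without that input, you would have to produce the count yourself. One way is a sign-reversing involution (a discrete Morse matching) on $\{W:\nu(G_W)\ge t\}$ whose unmatched sets number exactly $\binom{r-1}{t-1}\binom{s-1}{t-1}$; that is essentially the work done in the cited paper.
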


\begin{proof}
Let $n=rs$. Since $r, s\geq t$, the graph $K_{r,s}$ contains a cycle of length $2t$. Moreover,
\[
J_{t;r,s}=I_{\NM_t(K_{r,s})}.
\]
Hence Corollary~\ref{cor:algebraic-consequences} gives
\[
\reg J_{t;r,s}=2t-1
\]
and
\[
\beta_{i,i+2t-1}(J_{t;r,s})\ne0 \quad (1\leq i\leq n-2t+1).
\]
In particular,
\[
\pd J_{t;r,s}\geq n-2t+1.
\]
	
On the other hand, Corollary~\ref{cor:zhu-bound} and the Auslander--Buchsbaum formula \cite{BH98}*{Theorem~1.3.3}, applied to the $R_{r,s}$-module $J_{t;r,s}$, give
\[
n-2t+1 \leq \pd J_{t;r,s}= n-\depth J_{t;r,s} \leq n-2t+1.
\]
Therefore
\[
\pd J_{t;r,s}=n-2t+1 \quad \text{and} \quad \depth J_{t;r,s}=2t-1.
\]
Since $J_{t;r,s}$ is generated in degree $t$, it follows that
\[
t_0(J_{t;r,s})=t.
\]
For $1\leq i\leq n-2t+1$, the above nonvanishing gives
\[
t_i(J_{t;r,s})\geq i+2t-1,
\]
while $\reg J_{t;r,s}=2t-1$ gives the reverse inequality. Hence
\[
t_i(J_{t;r,s})=i+2t-1 \quad (1\leq i\leq n-2t+1).
\]
	
Finally, since $J_{t;r,s}$ is squarefree, its multigraded Betti numbers are supported in squarefree multidegrees. In total degree $n$, the only such multidegree is $E(K_{r,s})$. Hence the definition of the graded Betti numbers and Hochster's formula~\eqref{eq:hochster} give
\[
\beta_{n-2t+1,n}(J_{t;r,s})
=\beta_{n-2t+1,E(K_{r,s})}(J_{t;r,s})
=\dim_K\widetilde H_{2t-3}(\NM_t(K_{r,s});K).
\]
By \cite{LSW08}*{Theorem~1.4},
\[
\NM_t(K_{r,s})\simeq \bigvee_{\binom{r-1}{t-1}\binom{s-1}{t-1}} S^{2t-3}.
\]
Hence
\[
\beta_{n-2t+1,n}(J_{t;r,s})=\binom{r-1}{t-1}\binom{s-1}{t-1}.
\]
	
If $\beta_{p, p+q}(J_{t;r,s})\ne0$, then
\[
p\leq\pd J_{t;r,s}=n-2t+1 \quad \text{and} \quad q\leq\reg J_{t;r,s}=2t-1.
\]
Thus $\beta_{n-2t+1,n}(J_{t;r,s})$ is extremal. Moreover, for every other nonzero $\beta_{p, p+q}(J_{t;r,s})$, the pair $(n-2t+1,2t-1)$ is distinct from and coordinatewise no smaller than $(p, q)$. Hence $\beta_{p, p+q}(J_{t;r,s})$ is not extremal, proving uniqueness.
\end{proof}

Taking \(t=r\) in Theorem~\ref{thm:complete-bipartite}, and using \(J_{r;r,s}=F(\Delta_{r,s})\) and the standard relations recalled
above, we obtain
\[
\reg\bigl(R_{r,s}/F(\Delta_{r,s})\bigr)=\depth\bigl(R_{r,s}/F(\Delta_{r,s})\bigr)=2r-2.
\]
Thus \cite{JZWZ23}*{Conjecture~5.15} is the case \(t=r\) of Theorem~\ref{thm:complete-bipartite}.

%%%%%%%%%%%%%%%%%%%%%%%%%%Acknowledgements%%%%%%%%%%%%%%%%%%%%%%%%
\vspace{2mm}
\noindent\textbf{Acknowledgements}.
The authors thank Professor Dancheng Lu for his encouragement and helpful comments.

%%%%%%%%%%%%%%%%%%%%%%%%%%References%%%%%%%%%%%%%%%%%%%%%%%%

\end{document}